\def\figurename{Figure} 
\renewcommand{\fnum@figure}[1]{\figurename~\thefigure.}
\def\tablename{Table} 
\renewcommand{\fnum@table}[1]{\tablename~\thetable.}
\newtheorem{theorem}{Theorem}[section]
\theoremstyle{definition}
\newtheorem{definition}[theorem]{Definition}
\newtheorem{example}[theorem]{Example}
\theoremstyle{remark}
\newtheorem{remark}[theorem]{Remark}
\numberwithin{equation}{section}
\def\P{\mathbb P}
\def\R{\mathbb R}
\def\E{\mathbb E}
\def\Q{\mathbb Q}
\def\E{\mathbb E}
\begin{document}

\title{Explicit solution for backward stochastic Volterra integral equations with linear time delayed generators\thanks{This work is supported by the National Natural Science Foundation of China (N° 11871076)}}

\author{Y. Ren$^{\,a,}$\thanks{yongren@126.com}\;\;  H. Coulibaly$^{\,b,}$ \thanks{harounbolz@yahoo.fr}\;\; and\;\; A. Aman$^{\,b,}$ \thanks{augusteaman5@yahoo.fr\;/\; aman.auguste@ufhb.edu.ci, corresponding author}\\
a. Anhui Normal University, Department of Mathematics, Wuhu, China\;\;\;\;\;\;\;\;\;\;\;\;\;\;\;\;\;\;\;\;\;\;\;\\
b. Université F. H. Boigny, UFR Mathématiques et Informatique, Abidjan, Côte d'Ivoire}

\date{}
\maketitle \thispagestyle{empty} \setcounter{page}{1}

\thispagestyle{fancy} \fancyhead{}
 \fancyfoot{}
\renewcommand{\headrulewidth}{0pt}

\begin{abstract}
This note aims to give an explicit solution for backward stochastic Volterra integral equations with linear time delayed generators. The process $Y$ is expressed by an integral whose kernel is explicitly given. The processes $Z$ is expressed
by Hida-Malliavin derivatives involving $Y$.
\end{abstract}

\vspace{.08in} \noindent \textbf{MSC}:Primary: 60F05, 60H15; Secondary: 60J30\\
\vspace{.08in} \noindent \textbf{Keywords}: Backward stochastic Volterra integral equation, time delayed generator, Linear equation, Explicit solution, Hida- Malliavin derivative.

\section{Introduction}
\label{intro}
\setcounter{theorem}{0}\setcounter{equation}{0}
Backward stochastic Volterra integral equations (BSVIEs, for short) have been initiated in \cite{P8} under Lipschitz condition. This assumption has been relaxed to local Lipschitz condition in \cite{P9}. A few years later, the completed theory of backward stochastic Volterra integral equations (BSVIEs, for short) has been introduced by Yong in \cite{Yong,Yong1} and references therein. This king of BSDEs has been connected to optimal control problems for controlled Volterra type systems.
Recently, in \cite{CA}, Coulibaly and Aman study under Lipschitz assumption the following BSVIEs with general time delayed generator
\begin{eqnarray}
Y(t)&=& \xi+\int_t^Tf(t,s,Y_s,Z_{t,s})ds-\int_t^TZ(t,s)dW(s).\label{BSVIE}
\end{eqnarray}
Here the process $(Y_s,Z_{t,s})=(Y(s+u),Z(s+u))_{-T\leq u\leq 0}$ denotes the past of process $(Y,Z)$ until $(t,s)$. This type of BSDEs can be viewed as the combinaison of Voltera BSDEs and delayed BSDE introduced in \cite{DI1,DI2}. This study is very interesting since BSVIEs with delayed generator are used to model a recursive utility with delay which appears when we face the problem of dynamic modeling of non monotone preferences. For more detail we refer the reader to \cite{Del} and reference therein. However, as the counter examples in \cite{CA} attest, the existence and uniqueness of the solution is not valid in general. We distinguish two conditions of existence and uniqueness. First, in the case of general probability measure, it is necessary and sufficient that, in a certain sense, the Lipschitz constant or the terminal time be small enough. Secondly, if the delay measure $\alpha$ is supported by $[-\gamma,0]$ for $\gamma>0$ sufficiently small, then existence and uniqueness result hold for all Lipschitz constant and terminal time.  

In this paper our study concern a the linear BSVIE with time delayed generator which is a special case of BSVIE \eqref{BSVIE}. Let $\{F(t) , 0\leq t\leq T \}$ be a given stochastic process, not necessarily adapted, $(G(t,s), 0\leq t\leq s\leq T)$ and $(g(s), 0\leq s\leq T)$ be given functions of $t,\,s$ with values in $\R$ and denote by $\alpha$ a probability measure on $([-T,0],\mathcal{B}([-T,0]))$. We consider the following BSVIE:
\begin{eqnarray}
Y(t)&=& F(t)+\int_t^T\int_{-T}^{0}[G(t+u,s+u)Y(s+u)+g(s+u)Z(t+u,s+u)]\alpha(du)ds\nonumber\\
&&-\int_t^TZ(t,s)dW(s).\label{BSVIElinear}
\end{eqnarray}
We remark that if $\alpha$ is the Dirac measure on $0$ i.e $\alpha=\delta_{0}$, hence \eqref{BSVIElinear} becomes
\begin{eqnarray}
Y(t)&=& F(t)+\int_t^T[G(t,s)Y(s)+g(s)Z(t,s)]ds\nonumber\\
&&-\int_t^TZ(t,s)dW(s).\label{BSVIElinear1}
\end{eqnarray}
which has been study in \cite{Hu} and next in \cite{Wal}.

The rest of this note is organized as follows. In Section 2, we introduce some fundamental knowledge and assumptions concerning the data of BSVIEs \eqref{BSVIElinear}. Section 3 is devoted to derive our result. 

\section{Preliminaries}
\setcounter{theorem}{0} \setcounter{equation}{0}
In all this paper, we shall work on a probability space $(\Omega, \mathcal{F},\P)$ equipped with the filtration ${\bf F}= (\mathcal{F}_t )_{t\geq0}$ satisfying the usual condition. Let $W = (W(t),\; t\geq 0 )$ be a standard one dimensional ${\bf F}$-adapted Brownian motion. In order to well defined a notion of solution of our BSVIEs with time delayed generator, let us consider the following spaces.      
\begin{description}
\item $\bullet $ Let $ L_{-T}^2 (\mathbb{R} ) $ denote the space of measurable functions $ z : [-T;0] \rightarrow \mathbb{R} $ satisfying
$$   \int_{-T}^0 \mid z(t) \mid^2 dt < \infty .
$$
\item $ \bullet $ Let $ L_{-T}^{\infty } (\mathbb{R} )$ denote the space of bounded, measurable functions $ y : [-T,0] \rightarrow \mathbb{R} $\\
satisfying
$$
\sup\limits_{-T\leq t\leq 0} \mid y(t) \mid < \infty.
$$

\item $\bullet$ Let $ \mathbb{H}_1:= \mathcal{H}_{[-T,T]}^2(\R)$ denote the space of all adapted process $\eta$ with values in $\R$ such that $\eta(t)=\eta(0)$ for $t<0$ and $$\|\eta\|_{\mathbb{H}_1}=\E\left[\left(\int_{-T}^T e^{\beta s}|\eta(s)|^2ds\right)\right]^{1/2}<+\infty.$$
\item $\bullet$ Let $ \mathbb{H}_2:= \mathcal{H}_{D_T}^2(\R)$ denote the space of all functions $\varphi: D_T\rightarrow \R $ such for all $t\in [-T,T]$, the process $\varphi(t,s)_{s\geq t}$ is adapted, $\varphi(t,s)=0$ if $t<0$ or $s<0$ and $$\|\varphi\|_{\mathbb{H}_2}=\E\left[\left (  \int_0^T\int_t^T e^{\beta s} |\varphi(t,s)|^2dsdt\right)\right]^{1/2}<+\infty,$$ where $D_T=\{(t,s)\in [-T,T]^2, t\leq s\}$.
\item $\bullet$ Let $\mathcal{S}^2(\R)$ denote the space of all predictable process $\eta$ with values in $\R$ such that the norm  $\displaystyle \|\eta\|_{\mathcal{S}^2}=\E\left[\sup_{0\leq s\leq T}e^{\beta s}|\eta(s)|^2\right]<+\infty$.
\end{description}
\begin{definition}
The process $(Y(.),Z(.,.))$ is called solution  to \eqref{BSVIElinear} if $(Y(.),Z(.,.))$ belongs to $\mathbb{H}_1\times\mathbb{H}_2$ and satisfies \eqref{BSVIElinear}.\end{definition}

Our result will be derived under the following assumptions.

\begin{description}
\item {(\bf A1)}  $\{F(t), 0\leq t \leq T\}$ is a given stochastic process, not necessarily adapted that belongs in $L^2 (\Omega,\mathcal{F}_T,\mathbb{P})$.
\item {(\bf A2)} $G: D_T\rightarrow \mathbb{R} $ is a measurable, uniformly bounded function such that $G(t,s)=0$ if $t<0$ or $s<0$.
\item {(\bf A3)} $g:[-T,T] \rightarrow \mathbb{R}$ is a measurable, uniformly bounded function such that $g(s)=0$ if $s<0$.
\end{description}
We end this section by this clarification : for each $(t,s) \in D_T,\;\; (Y(t),Z(t,s)) $ denotes the value of the process $(Y,Z)$ at $(t,s)$ while $(Y_t,Z_{t,s}) = (Y(t+u),Z(t+u,s+u))_{-T \leq u \leq 0} $ denotes all the past of $(Y,Z)$ until $(t,s) $. Therefore, for each $(t,s) \in D_T$ and almost all $\omega \in \Omega,\;\; Y_t(\omega) $ and $ Z_{t,s} (\omega) $ belong respectively to $L_{-T}^{\infty} (\mathbb{R} )$ and $L_{-T}^2(\mathbb{R})$.

\section{Main result}
Let us now derive our main result that is an explicit formula of the solution of BSVIE \eqref{BSVIElinear}. In this fact, we state a need notations and results. Let us set
\begin{eqnarray*}\label{A00}
  	\Phi(t,r)=\alpha([r-T,0])G(t,r)
\end{eqnarray*}
and consider the sequence $(\Phi^{(n)})_{n\geq 0}$ defined recursively as follows: $\Phi^{(1)}(t,r)=\Phi(t,r)$ and for all $n\geq 2$
\begin{eqnarray*}\label{A1}
\Phi^{(n)}(t,r)=\int_t^r \Phi^{(n-1)}(t,s)\Phi(s,r)ds.
\end{eqnarray*} 
\begin{remark}
Since $\alpha$ is a probability measure, there exist a constant $C>0$ (a uniform bound of the function $G$) such $|\Phi(t,r)|<C$. Moreover, by induction method we prove that for all $n\geq 2$, 
\begin{eqnarray*}
|\Phi^{(n)}(t,s)|\leq \frac{(CT)^{n}}{n!}.
\end{eqnarray*}
Hence, for all $t,s$
\begin{eqnarray*}
\sum_{n=1}^{+\infty}|\Phi^{(n)}(t,s)|<+\infty.
\end{eqnarray*}
\end{remark}
\begin{theorem}\label{linear}
Assume that $({\bf A1})$-$({\bf A3})$ hold. If the horizon time $T$ or bound of $G$ and $g$  are small enough, then BSVIE \eqref{BSVIElinear} has a unique solution $(Y,Z)\in \mathcal{S}^2 \times \mathbb{H}_2$. Next setting 
\begin{eqnarray}\label{A0}
\Psi(t,r)=\sum_{n=1}^{\infty}\Phi^{(n)}(t,r), 
\end{eqnarray} 
 we have
\begin{enumerate}
\item[(i)]
\begin{eqnarray*}
Y(t)= \mathbb{E}^{\mathbb{Q}} \left[ F(t)+\int_t^T\Psi (t,r)F(r)dr \lvert \mathcal{F}_t \right], 
\end{eqnarray*}
where $\mathbb{Q}$ is a probability measure equivalent to $\P$.
\item[(ii)] For all $0\leq t\leq T$, let us set
\begin{eqnarray}\label{A01}
U(t) = F(t) + \int_t^T \alpha \left( (r-T,0]\right)G(t,r)Y(r)dr-Y(t).
\end{eqnarray}
Then 
\begin{eqnarray*}
Z(t,s) = \mathbb{E}^{\mathbb{Q}} \left[ D_sU(t) -U(t) \int_s^T D_s g(r)dW^{\mathbb{Q}}(r) \lvert \mathcal{F}_s \right] ; \;\;0\leq t\leq T,
\end{eqnarray*}
where $D_s$ denotes the Hida-Malliavin derivatives state briefly in \cite{Hu}. 
\end{enumerate}
\end{theorem}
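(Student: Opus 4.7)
The plan is to combine the author's earlier existence theory from \cite{CA} with a Girsanov change of measure plus a resolvent-kernel expansion for $Y$, and a Clark--Ocone representation under the changed measure for $Z$.

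Existence and uniqueness follow by specializing the general result of \cite{CA}: the driver in \eqref{BSVIElinear} has global Lipschitz constant controlled by the uniform bounds of $G$ and $g$, so the smallness hypothesis activates the contraction estimate of \cite{CA} and produces $(Y,Z)\in\mathbb{H}_1\times\mathbb{H}_2$; a standard BDG/Doob estimate applied to the conditional-expectation representation upgrades $Y$ to $\mathcal{S}^2$. For part (i), I would introduce $\mathbb{Q}$ with density $\mathcal{E}(\int_0^\cdot g(s)\,dW(s))_T$, so that $dW^{\mathbb{Q}}=dW-g\,ds$ and the $g$-driven $Z$-term in the drift is absorbed into the stochastic integral $-\int_t^T Z(t,s)\,dW(s)$. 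On the remaining $G$-driver I would change variables $r=s+u$, apply Fubini, and invoke the support property $G(a,b)=0$ for $a<0$ or $b<0$ to collapse the inner $\alpha$-integral to $\alpha([r-T,0])$, producing the deterministic Volterra kernel $\Phi(t,r)$. Conditioning on $\mathcal{F}_t$ under $\mathbb{Q}$ then gives
\begin{equation*}
Y(t)=\mathbb{E}^{\mathbb{Q}}\Big[F(t)+\int_t^T\Phi(t,r)Y(r)\,dr\,\Big|\,\mathcal{F}_t\Big],
\end{equation*}
and Picard iteration of this identity generates the convolution powers $\Phi^{(n)}$. The estimate $|\Phi^{(n)}|\le (CT)^n/n!$ from the remark preceding the theorem, combined with dominated convergence, replaces $Y$ by $F$ in the limit and yields the announced formula with $\Psi=\sum_{n\ge 1}\Phi^{(n)}$.

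For part (ii), I would use \eqref{A01} together with the explicit form of $Y$ just derived to observe that $s\mapsto \mathbb{E}^{\mathbb{Q}}[U(t)\mid \mathcal{F}_s]$ is a $\mathbb{Q}$-martingale whose integrand against $dW^{\mathbb{Q}}$ is $Z(t,s)$. Applying the Clark--Ocone formula under $\mathbb{Q}$ as recalled in \cite{Hu} yields $Z(t,s)=\mathbb{E}^{\mathbb{Q}}[D_s^{\mathbb{Q}}U(t)\mid \mathcal{F}_s]$, and the transfer rule $D_s^{\mathbb{Q}}H=D_sH-H\int_s^T D_sg(r)\,dW^{\mathbb{Q}}(r)$ between the Hida--Malliavin derivatives under the two equivalent measures then produces the stated expression for $Z$.

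The main obstacle will be the combined change of variables and Girsanov step in part (i): the stochastic integral $\int_t^T Z(t,s)\,dW(s)$ carries the first argument frozen at $t$, whereas the $Z$-term inside the driver is $Z(t+u,s+u)$, so the cancellation of the $g$-term must be arranged uniformly in $t$ and meshed with the reindexing $r=s+u$. A secondary subtlety is the legitimacy of the Clark--Ocone step for the possibly non-adapted process $F$, which is precisely why the Hida (rather than classical) Malliavin derivative is used, following \cite{Hu}.
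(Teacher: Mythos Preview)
Your outline is close in spirit, but the Girsanov-first strategy you propose for part~(i) cannot be executed as written, and the difficulty you yourself flag as ``the main obstacle'' is in fact a genuine gap rather than a technical nuisance.

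Two concrete problems. First, the change of variable $r=s+u$ alone does not collapse the $G$-term to $\Phi(t,r)=\alpha([r-T,0])G(t,r)$: after setting $r=s+u$ the first argument of $G$ is still $t+u$, not $t$, so the support condition on $G$ constrains $u$ through $t$ as well as through $r$, and you do not obtain a kernel of the form $\Phi(t,r)$. Second, and for the same reason, the density $\mathcal{E}\bigl(\int_0^\cdot g(s)\,dW(s)\bigr)_T$ does not absorb the $g$-term: the drift you must cancel is $\int_{-T}^0 g(s+u)Z(t+u,s+u)\,\alpha(du)$, which after any reindexing in $s$ still carries $Z(t+u,\cdot)$, whereas the stochastic integral carries $Z(t,\cdot)$. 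No choice of Girsanov density in $s$ alone reconciles these.

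The paper closes this gap by a device you are missing: it integrates the entire BSVIE over $t\in[w,T]$, which makes a \emph{simultaneous} change of variables $(t,s)\mapsto(t+u,s+u)$ available. Both delayed terms then reduce to undelayed expressions in $(t,s)$ carrying the weight $\alpha\bigl((s-T,(t-w)\wedge 0]\bigr)$. Differentiating back in $w$ yields the non-delayed linear BSVIE
\[
Y(t)=F(t)+\int_t^T \Phi(t,s)Y(s)\,ds-\int_t^T Z(t,s)\bigl[dW(s)-\alpha((s-T,0])\,g(s)\,ds\bigr],
\]
and only at this stage is Girsanov applied, with density $\mathcal{E}\bigl(\int_0^\cdot \alpha((s-T,0])\,g(s)\,dW(s)\bigr)_T$; note the extra factor $\alpha((s-T,0])$ that your proposed density omits. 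From this point onward your resolvent iteration for~(i) and your Clark--Ocone argument for~(ii) coincide with the paper's.
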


\begin{proof}
Let us first prove that BSVIE \eqref{BSVIElinear} admits a unique solution. In this fact, thanks to the work of Coulibaly and Aman (see \cite{CA}, Theorem 3.3), it suffices to show that its generator is delayed Lipschitz. Let denote 
\begin{eqnarray*}
f(t,s,Y_s,Z_{t,s}) = \int_{-T}^{0}[G(t+u,s+u)Y(s+u)+g(s+u)Z(t+u,s+u)]\alpha(du).
\end{eqnarray*}
For $\P \otimes \lambda $-a.e. $(\omega ,(t,s) ) \in \Omega \times D_T $ and any $ (y_t,z_{t,s}),(y'_t,z'_{t,s}) \in  L_{-T}^{\infty} (\mathbb{R})\times  L_{-T}^2 (\mathbb{R})$, we have
\begin{eqnarray*}
&&\left|f(t,s,y_s,z_{t,s})-f(t,s,y'_s ,z'_{t,s})\right|^2 \\
& \leq & 2\int_{-T}^0 G^2(t+u,s+u)\alpha(du)\int_{-T}^0 \left|y(s+u)- y'(s+u) \right|^2\alpha(du)\\
&& + 2\int_{-T}^0 g^2(s+u)\alpha(du)\int_{-T}^0 \left|z(t+u,s+u)- z'(t+u,s+u) \right|^2\alpha(du)\\
& \leq & K  \left( \int_{-T}^0\mid y(s+u) - y'(s+u) \mid^2  \alpha (du) + \int_{-T }^0\mid z(t+u,s+u) - z'(t+u,s+u) \mid^2 \alpha (du) \right),
\end{eqnarray*}
where $K$ is a constant depending only on the uniform bound of $G$ and $g$.

Now it remain to prove $(i)$ and $(ii)$. Let us begin with $(i)$. Integrating each term of BSVIE \eqref{BSVIElinear}, we obtain for all $w\in [0,T]$, 
\begin{eqnarray}\label{A2}
\int_w^T Y(t)dt& =& \int_w^T F(t)dt +\int_w^T \int_t^T\int_{-T}^{0}G(t+u,s+u)Y(s+u)\alpha(du)dsdt\nonumber\\
&& + \int_w^T\int_t^T\int_{-T}^{0}g(s+u)Z(t+u,s+u) \alpha(du)dsdt\nonumber\\
&& -\int_w^T \left( \int_t^TZ(t,s)dW(s)\right) dt.
\end{eqnarray}
Next, we apply respectively Fubini's theorem, change of variable and use the fact that $G(t,s)=Z(t,s)=g(s)=0$ for $t<0$ or $s<0$ to derive 
\begin{eqnarray}\label{A3}
&&\int_w^T \int_t^T\int_{-T}^{0}G(t+u,s+u)Y(s+u)\alpha(du)dsdt\nonumber\\
&=&\int_{-T}^{0}\int_{w+u}^{T+u} \int_{t+u}^{T+u}G(t,s)Y(s){\bf 1}_{\{t\leq s\}}dsdt\alpha(du)\nonumber\\
&=& \int_0^T \int_0^T \alpha\left((s-T),(t-w)\wedge 0]\right)G(t,s)Y(s){\bf 1}_{\{t\leq s\}}dsdt\nonumber\\ 
\end{eqnarray}
and
\begin{eqnarray}\label{A4}
&&\int_w^T\int_t^T\int_{-T}^{0}g(s+u)Z(t+u,s+u) \alpha(du)dsdt\nonumber\\&=&\int_{-T}^{0}\int_{w+u}^{T+u}\int_{t+u}^{T+u}g(s)Z(t,s){\bf 1}_{\{t\leq s\}}dsdt \alpha(du)\nonumber\\
&=& \int_{0}^T \int_0^T \alpha\left((s-T),(t-w)\wedge 0]\right)g(s)Z(t,s){\bf 1}_{\{t\leq s\}}dsdt.\nonumber\\ 
\end{eqnarray} 
Putting \eqref{A3} and \eqref{A4} to \eqref{A2}, we obtain for all $w\in [0,T]$, 
\begin{eqnarray}\label{A2bis}
\int_w^T Y(t)dt& =& \int_w^T F(t)dt +\int_0^T \int_t^T\alpha\left((s-T),(t-w)\wedge 0]\right)G(t,s)Y(s)dsdt\nonumber\\
&& + \int_0^T\int_t^T\alpha\left((s-T),(t-w)\wedge 0]\right)g(s)Z(t,s)dsdt\nonumber\\
&& -\int_w^T \left( \int_t^TZ(t,s)dW(s)\right) dt.
\end{eqnarray}
Let us set
\begin{eqnarray*}
H(w)=\int_w^T Y(t)dt,\;\; I(w)=\int_w^T\left( F(t)-\int_t^TZ(t,s)dW(s)\right)dt,
\end{eqnarray*}
and 
\begin{eqnarray*}
J(w)&=&\int_0^T \int_t^T\alpha\left((s-T),(t-w)\wedge 0]\right)(G(t,s)Y(s)+g(s)Z(t,s))dsdt.
\end{eqnarray*}
We have 
\begin{eqnarray}\label{D1}
H'(t)=I'(t)+J'(t).
\end{eqnarray}
It is not difficult to prove that
\begin{eqnarray}\label{D2}
H'(t)=-Y(t)
\end{eqnarray}
and 
\begin{eqnarray}\label{D3}
I'(t)=-F(t)+\int_t^TZ(t,s)dW(s).
\end{eqnarray}
It remains for us to look very closely $J'(t)$. For this, let us remark that 
\begin{eqnarray*}
J'(w)=\int_0^T \int_t^T\left(\int^0_{-T}\frac{\partial{\bf 1}_{(\theta,\phi(w)]}}{\partial w}(u)\alpha(du)\right)(G(t,s)Y(s)+g(s)Z(t,s))ds dt,
\end{eqnarray*}
where $\theta=(s-T)$ and $\phi(w)=(t-w)\wedge 0 $.

Thanks to analysis tools and extensive calculations, we obtain:
\begin{eqnarray*}
J'(t)=-\int_t^T\alpha([((T-s),0])(G(t,s)Y(s)+g(s)Z(t,s))ds.
\end{eqnarray*}
Therefore \eqref{D3} becomes
\begin{eqnarray}\label{A5}
 Y(t) &=&  F(t) +  \int_t^T \Phi(t,s)Y(s)ds\nonumber \\
&& - \int_t^T Z(t,s)\left[dW(s) - \alpha \left( (s-T,0]\right)g(t,s)ds \right], \;\; a.s.
\end{eqnarray} 
Let us set
\begin{eqnarray*}
M(T)=\exp\left( \int_0^T \alpha \left( (s-T,0] \right)g(s) dW(s) -\dfrac{1}{2} \int_0^T \alpha^2 \left( (s-T,0]  \right)g^2(s)ds \right).
\end{eqnarray*}
Thus since $\displaystyle  \int_0^T \alpha^2 \left((s-T,0]  \right)g^2(s)ds $ is finite, the quantity $\Q$ defined by 
\begin{eqnarray*}\label{Novikov}
\dfrac{d\mathbb{Q}}{d \mathbb{P}}\lvert_{\mathcal{F}_T}&=&M(T) 
\end{eqnarray*}
is a probability measure equivalent to $\P$. Moreover, in view of Girsanov theorem, the process $W^{\Q}$ defined by 
\begin{eqnarray*}
W^{\mathbb{Q}} (t) = W(t)-\int_0^t \alpha \left((s-T,0]  \right)g(s)ds, \;\; 0\leq t\leq T,
\end{eqnarray*}
is a $\mathbb{Q}$-Brownian motion. Therefore, according to \eqref{A5}, the process $(Y,Z)$ solution of \eqref{BSVIElinear} satisfies
\begin{eqnarray}\label{B0}
 Y(t) &=&  F(t) +  \int_t^T \Phi(t,s)Y(s)ds\nonumber \\
&& - \int_t^T Z(t,s)dW^{\mathbb{Q}^t}(s).
\end{eqnarray} 
Finally taking the conditional $\Q$-expectation with respect $\mathcal{F}_t$, we derive
\begin{eqnarray}\label{B1}
Y_t&=&\E^{\Q}\left(F(t)+ \int_t^T \Phi(t,s)Y(s)ds|\mathcal{F}_t\right)\nonumber\\
&=& \E^{\Q}\left(F(t)|\mathcal{F}_t\right)+\int_t^T \Phi(t,s)\E^{\Q^t}\left(Y(s)|\mathcal{F}_t\right)ds.
\end{eqnarray}
Now, if we set 
\begin{eqnarray*}
\overline{F}(t,r)=\E^{\Q}(F(t)|\mathcal{F}_r)\;\; \mbox{and}\;\; \overline{Y}(r)=\E^{\Q}(Y(s)|\mathcal{F}_r),\;\; r\leq s\leq T,
\end{eqnarray*}
and take a conditional $\Q$-expectation with respect $\mathcal{F}_r$ to the both side of \eqref{B1}, we obtain
\begin{eqnarray}\label{B2}
\overline{Y}(t)&=& \overline{F}(t,r)+ \int_t^T \Phi(t,s)\overline{Y}(s)ds,\;\; r\leq t\leq T.
\end{eqnarray}
On the other hand, since 
\begin{eqnarray*}
\overline{Y}(s)&=& \overline{F}(s,r)+ \int_s^T \Phi(s,u)\overline{Y}(v)dv,
\end{eqnarray*}
it follows from \eqref{B2} and \eqref{A1} that
\begin{eqnarray*}
\overline{Y}(t)&=& \overline{F}(t,r)+ \int_t^T \Phi(t,s)\overline{F}(s,r)ds\\
&&+\int^T_t\int_s^T\Phi(t,s)\Phi(s,v)\overline{Y}(v)dvds\\
&=&\overline{F}(t,r)+ \int_t^T \Phi(t,s)F(s,r)ds\\
&&+ \int^T_t\left[\int_t^v \Phi(t,s)\Phi(s,v)ds\right]\overline{Y}(v)dv\\
&=&\overline{F}(t,r)+ \int_t^T \Phi(t,s)\overline{F}(s,r)ds+\int^T_t\Phi^{(2)}(t,s)\overline{Y}(s)ds.
\end{eqnarray*}
By repeating infinitely and using the same arguments, we have 
\begin{eqnarray*}
\overline{Y}(t)&=& \overline{F}(t,r)+\sum_{n=0}^{+\infty}\int_t^T\Phi^{(n)}(t,s)\overline{F}(s,r)ds\\
&=& \overline{F}(t,r)+ \int_t^T\Psi(t,s)\overline{F}(s,r)ds,
\end{eqnarray*}
where $\Psi$ is defined by \eqref{A0}. Finally replacing $r$ by $t$, we get $(i)$.

Let us end this proof by provide $(ii)$. According to it definition and in view of \eqref{B0}, we have 
\begin{eqnarray*}
U(t)=\int^T_tZ(t,s)dW^{\Q}(s).
\end{eqnarray*}
Finally with the identical argument used by Hu and Oksendal \cite{Hu} we get $(ii)$. 
\end{proof}
Let us illustre our result by an adapted version of example appear in \cite{Hu}.
\begin{example}
As in \cite{Hu}, we consider $\rho$ a bounded function defined on $[0,+\infty[$ such that $$\mathcal{L}\rho(x)=\int_0^{+\infty}e^{-xy}\rho(y)dy$$ is the Laplace transform of $\rho$ and $\alpha$ the uniform probability measure. Let set 
\begin{eqnarray*}
G(t,s)=\frac{T(s-t)}{T-s}\rho(s-t), \;\; 0\leq t\leq s\leq T.
\end{eqnarray*}
Hence, we have $\Phi(t,s)=\psi(s-t)=(s-t)\rho(s-t)$. Thus, $\Phi^{(n)}(t,s)=\psi_n(t-s)$, where $\psi_n$ denotes the $n$ fold convolution of $\psi$. Since we have 
\begin{eqnarray*}
\mathcal{L}\psi_n(x)=\left(\mathcal{L}\psi(x)\right)^n	
\end{eqnarray*}
and 
\begin{eqnarray*}
\mathcal{L}\left(\sum^{+\infty}_{n=1}\psi_n\right)(x)=\sum^{+\infty}_{n=1}\mathcal{L}\psi_n(x),
\end{eqnarray*}
we derive
\begin{eqnarray*}
\mathcal{L}\Psi(t,s)&=&\sum_{n=1}^{+\infty}\left(\mathcal{L}\psi(s-t)\right)^n\nonumber\\
&=&\frac{\mathcal{L}\psi(s-t)}{1-\mathcal{L}\psi(s-t)}.	
\end{eqnarray*}
where in last equality, we have assumed that $|\mathcal{L}\psi(x)|<1$, for all $x>0$. Specially taking $\rho(y)=e^{-y},\;\; y>0$, we provide that 
\begin{eqnarray}\label{L}
\mathcal{L}\psi(x)=\frac{1}{(x+1)^2}.
\end{eqnarray}
Set $\Psi(t,s)=\overline{\psi}(s-t)$, it follows from \eqref{L} that
\begin{eqnarray*}
\mathcal{L}\overline{\psi}(x)&=&\sum_{n=1}^{+\infty}\left(\mathcal{L}\psi(x)\right)^n\nonumber\\
&=&\frac{\mathcal{L}\psi(x)}{1-\mathcal{L}\psi(x)}\\
&=& \frac{1}{2}\left(\frac{1}{x}-\frac{1}{x+1}\right).	
\end{eqnarray*}
Finally, by converse Laplace transform, we derive
\begin{eqnarray*}
\Psi(t,s)=\frac{1}{2}(1-e^{-(s-	t)}).
\end{eqnarray*}
and 
\begin{eqnarray*}
Y(t)=\E^{\Q}\left(F(t)+\frac{1}{2}\int_t^T(1-e^{-(s-t)})F(s)ds|\mathcal{F}_t\right).
\end{eqnarray*}  
\end{example}

The smoothness properties of $Z(t,s)$ with respect to $t$ are important in the study of optimal control (see, \cite{AO}) and the numerical solutions (see, \cite{HN} and references therein). Using the explicit form of the solution (Theorem \ref{linear}) we can give sufficient conditions for such smoothness in the linear case. 
\begin{theorem}
Suppose $g$ be deterministic, the function $F$ and $G$ are $\mathcal{C}^1$ with respect to variable $t$ and satisfy
\begin{eqnarray*}
\E^{\Q}\left[\int^T_0\left\{\int^T_t\left(F^2(t)+\alpha^2([s-T,0])G^2(t,s)+(F'(t))^2+\alpha^2([s-T,0])\left(\frac{\partial G(t,s)}{\partial t}\right)^2\right)ds\right\}dt\right]<+\infty.
\end{eqnarray*}
Then for $t<s\leq T$,
\begin{eqnarray}\label{C1}
Z(t,s)=\E^{\Q}\left[D_sF(t)+\int^T_s \alpha([r-T,0])G(t,r)D_sY(r)dr|\mathcal{F}_s\right].
\end{eqnarray}
Moreover, we get
\begin{eqnarray}\label{C2}
\E^{\Q}\left[\int^T_0\left\{\int^T_t\left(\frac{\partial Z(t,s)}{\partial t}\right)^2ds\right\}dt\right]<+\infty.
\end{eqnarray}
\end{theorem}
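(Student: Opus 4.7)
The plan is to specialize Theorem~\ref{linear}(ii) using the hypothesis that $g$ is deterministic, and then differentiate the resulting representation in $t$. Since $g$ carries no randomness, the Hida--Malliavin derivative $D_s g(r)$ vanishes identically, so the correction term in Theorem~\ref{linear}(ii) drops out and the formula collapses to
$$
Z(t,s)=\mathbb{E}^{\mathbb{Q}}\bigl[D_s U(t)\,\lvert\,\mathcal{F}_s\bigr].
$$

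The next step is to compute $D_s U(t)$ directly from the definition~(\ref{A01}). Applying $D_s$ termwise to
$$
U(t)=F(t)+\int_t^T\alpha((r-T,0])G(t,r)Y(r)\,dr-Y(t),
$$
I would use that $G$ and $\alpha$ are deterministic to push $D_s$ through the Lebesgue integral, and that $Y(t)$ is $\mathcal{F}_t$-measurable so $D_s Y(t)=0$ for $s>t$ (the same observation makes the contribution on $\{r<s\}$ vanish, although it is convenient to keep the integral over $[t,T]$). Taking $\mathbb{E}^{\mathbb{Q}}[\,\cdot\,|\mathcal{F}_s]$ then produces exactly~(\ref{C1}).

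For the integrability statement~(\ref{C2}), the idea is to differentiate~(\ref{C1}) in $t$ under the conditional expectation, which is permissible by the assumed $\mathcal{C}^1$ regularity of $F$ and $G$. The boundary contribution at $r=t$ equals $-\alpha((t-T,0])G(t,t)D_s Y(t)$ and vanishes because $s>t$, so
$$
\frac{\partial Z(t,s)}{\partial t}=\mathbb{E}^{\mathbb{Q}}\!\left[D_s F'(t)+\int_t^T\alpha([r-T,0])\,\frac{\partial G(t,r)}{\partial t}\,D_s Y(r)\,dr\,\Big\lvert\,\mathcal{F}_s\right].
$$
The required bound then follows by squaring, applying Jensen's inequality and Cauchy--Schwarz to the time integral, and invoking the $\mathbb{Q}$-It\^o isometry together with the Clark--Ocone representation of $Y$ obtained from Theorem~\ref{linear}(i) in order to control $\mathbb{E}^{\mathbb{Q}}[(D_s Y(r))^2]$ by the integrability data of $F$, $F'$, $G$ and $\partial_t G$. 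Integrating in $s$ and $t$ over $D_T$ and using the standing hypothesis of the theorem yields~(\ref{C2}).

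The main obstacle is the rigorous justification of the two exchanges of limit: first, that $D_s$ commutes with the Lebesgue integral in $r$, which requires $Y(\cdot)$ to lie in a suitable Hida--Malliavin Sobolev-type space, and second, that $\partial/\partial t$ commutes with both the conditional $\mathbb{Q}$-expectation and the integral. For (a), the explicit expression of $Y$ from Theorem~\ref{linear}(i), combined with the smoothness and boundedness of $F$, $G$ and $\Psi$, gives enough regularity; for (b), the $\mathcal{C}^1$ hypothesis on $F$ and $G$ together with the square-integrability assumption provides the $L^1$-domination of difference quotients needed to move the derivative inside. Once these interchanges are legitimized, the estimate~(\ref{C2}) is a direct consequence of the explicit formula for $\partial_t Z(t,s)$.
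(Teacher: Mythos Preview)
Your proposal is correct and follows essentially the same route as the paper: you invoke Theorem~\ref{linear}(ii), use that $g$ deterministic kills the $D_s g$ correction term, apply $D_s$ to the definition~(\ref{A01}) of $U(t)$ using $D_sY(t)=0$ for $s>t$, and take conditional $\mathbb{Q}$-expectation to obtain~(\ref{C1}). For~(\ref{C2}) the paper itself is terse (it simply writes ``Hence~(\ref{C2}) holds''), so your more explicit argument via differentiating~(\ref{C1}) in $t$ and estimating with Jensen and Cauchy--Schwarz actually supplies more detail than the original; in particular your handling of the boundary term at $r=t$ (vanishing because $D_sY(t)=0$) is a useful observation that the paper omits.
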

\begin{proof}
Since the function $g$ is deterministic, $D_sg(r)=0$ for all $s\leq r\leq T$. Thus it follows from $(ii)$ of Theorem 3.2 that
\begin{eqnarray*}
Z(t,s)=\E(D_sU(t)|\mathcal{F}_s).
\end{eqnarray*}
Next, as $D_sY(t)=0$, for $s>t$ and according to \eqref{A01} we have
\begin{eqnarray*}
\E^{\Q}(D_sU(t)|\mathcal{F}_s))=\E^{\Q^t}\left(D_sF(t)+\int_s^T\alpha([r-T,0])G(t,r)D_sY(r)dr|\mathcal{F}_s\right),
\end{eqnarray*}
which provides \eqref{C1}. Hence \eqref{C2} holds.
\end{proof}
\begin{remark}
In view of all the above, we can say that the BSVIEs with a linear delayed generator can be seen as a generalization of the BSVIEs with a linear generator. Indeed, in view of the proof of Theorem \ref{linear}, we have transformed BSVIEs with linear delayed generator \eqref{BSVIElinear} to classical linear BSVIE
with generator depending to a probability measure appear in the delayed linear generator.
\end{remark}

\section*{Acknowledgements}
This research was carried out with support of the National Natural Science Foundation of China (N°11871076).

\label{lastpage-01}
\end{document}